\documentclass[10pt]{article}

\title{A Remark on the Projective Geometry of Constant Curvature Spaces}
\author{Athanase Papadopoulos and Sumio Yamada}
\date{\today}

 \usepackage{amsmath}
 \usepackage{amssymb}
\usepackage{color}

\begin{document}
\newtheorem{theorem}{Theorem}
\newtheorem{proposition}[theorem]{Proposition}
\newtheorem{lemma}[theorem]{Lemma}
\newtheorem{corollary}[theorem]{Corollary}
\newtheorem{claim}[theorem]{Claim}
\newcommand{\weil}{Weil-Petersson }
\newcommand {\teich}{Teichm\"{u}ller }
\newcommand{\T}{{\mathcal T}}
\newcommand{\Tbar}{\overline{\mathcal T}}

\newenvironment{proof}{ {\sc {\bf Proof}}}{ {\sc {\bf q.e.d.}} \\}
\newenvironment{remark}{\noindent {\bf Remark}}{{\sc } \\}
\newenvironment{definition}{\noindent {\bf Definition}}{{\sc } }

\newcommand{\Tbdry}{\ensuremath{\partial {\cal T}}}
\newcommand{\g}{\ensuremath{\gamma}}
\newcommand{\del}{\partial}
\newcommand{\map}{\ensuremath{{\rm Map}(\Sigma)}}
\newcommand{\e}{\ensuremath{\varepsilon}}
\newcommand{\note}{\textcolor{red}}

\maketitle
%\thanks{Supported in part by JSPS Grant-in-aid for Scientific Research No.??????????}

\begin{abstract}
We highlight the relation between the projective geometries of $n$-dimensional Euclidean, spherical and hyperbolic spaces through the projective models of these spaces in the $n+1$-dimensional Minkowski space, using a cross ratio notion which is proper to each of the three geometries.

\bigskip

\noindent AMS classification : 51A05, 53A35.

\bigskip
\noindent Keywords : cross ratio, non-Euclidean geometry, projective geometry, Minkowski space, generalized Beltrami-Klein model.

\end{abstract}

\thanks{The second author is supported in part by JSPS Grant-in-aid for Scientific Research No.24340009}

\section{Cross Ratio} \label{s:1}

The projective geometry of a Riemannian manifold $(M, g)$ is the geometry of the space of lines/unparametrized geodesics of the manifold.  For each dimension $n \geq 2$, the Euclidean space ${\mathbb R}^n$, the sphere $S^n$ and the hyperbolic space ${\mathbb H}^n$ provide a set of canonical Riemannian manifolds of constant sectional curvatures $0, 1$ and $-1$ that are distinct from each other in the Riemannian geometric sense, but one can consider the projective geometry on each of these spaces.  The most important qualitative feature of projective geometry is the notion of cross ratio, and the Riemannian distance and angle invariance under isometric transformations of  the underlying Riemannian manifold are replaced by a weaker invariance of the cross ratio under projective transformations.  The goal of this article is to highlight the relation among the projective geometries of these three space forms through the projective models of these spaces in the Minkowski space ${\mathbb R}^{n, 1}$, by using a cross ratio notion which is proper to each of the three geometries.

 We start by recalling some classical facts. In the Euclidean plane ${\mathbb R}^2$, consider four  ordered distinct lines $l_1, l_2, l_3, l_4$ that are concurrent at a point $A$ and let $l$   be a  line that intersects these four lines at points $A_1, A_2, A_3, A_4$ respectively. Then the cross ratio $[A_2, A_3, A_4, A_1]$ of the ordered quadruple $A_1, A_2, A_3, A_4$ does not depend on the choice of the line $l'$. This property expresses the fact that the cross ratio is a \emph{projectivity invariant}. 

As a matter of fact, Menelaus (Alexandria, 2nd century A.D.)  considered the above property not only the Euclidean plane, but also on the sphere, where the lines  are the spherical geodesics, which are the great circles of the sphere.  Once there is a parallel between the Euclidean geometry and the spherical geometry, it is natural to expect to have the corresponding statement for hyperbolic geometry.  We now define the cross ratio for the three geometries.  
\medskip

 \begin{definition}  Consider a geodesic line in Euclidean, hyperbolic and spherical geometry respectively, and let 
  $A_1, A_2, A_3, A_4$ be four ordered pairwise distinct points on that line.
 We define the cross ratio $[A_1, A_2, A_3, A_4]$, 
 in the Euclidean case, by:
 \[
 [A_2, A_3, A_4, A_1]_e :=\frac{A_2 A_4}{A_3 A_4}\cdot  \frac{A_3 A_1}{A_2 A_1},
 \]
 in the hyperbolic case, by:
 \[
[A_2, A_3, A_4, A_1]_h := \frac{\sinh A_2 A_4}{\sinh A_3 A_4} \cdot \frac{\sinh A_3 A_1}{\sinh A_2 A_1},
 \]
and in the spherical case, by:
 \[
[A_2, A_3, A_4, A_1]_s := \frac{\sin A_2 A_4}{\sin A_3 A_4}\cdot  \frac{\sin A_3 A_1}{\sin A_2 A_1},
 \]
where $A_i A_j$ stands for the distance between the pair of points $A_i$ and $A_j$, which is equal to the length of the line segment joining them.  (For this, we shall assume that in the case of spherical geometry the four points lie on a hemisphere;  instead, we could work in the elliptic space, that is, the quotient of the sphere by its canonical involution.)
 \end{definition}
 
We denote by $U^n$ the open upper hemisphere of $S^n$ equipped with the induced metric. Let $X$ and $X'$ belong to the set $\{\mathbb{R}^n, \mathbb{H}^n, U^n\}$. We shall say that a map $X\to X'$ is a \emph{perspectivity}, or a \emph{perspective-preserving transformation} if it preserves lines and if it preserves the cross ratio of quadruples of points on lines. (We note that these terms are classical, see e.g. Hadamard \cite{Had} or Busemann \cite{Busemann1953}. We also note that such maps arise indeed in perspective drawing.) In what follows, using well-known projective models in $\mathbb{R}^{n+1}$ of hyperbolic space $\mathbb{H}^n$ and of the sphere $S^n$, we define natural homeomorphisms between $\mathbb{R}^n$, $\mathbb{H}^n$ and the open upper hemisphere of $S^n$ which are perspective-preserving transformations. The proofs are elementary and  are based on first principles of geometry.

The authors would like to thank Norbert A'Campo for sharing his enthusiasm and ideas.
 
 \section{Projective Geometry}

Up to now, the word ``projective" is being used as a property of the incidence of lines/geodesics in the underlying space.  On the other hand, the $n$-dimensional sphere and the $n$-dimensional hyperbolic space are realized ``projectively" in $n+1$-dimensional Euclidean space as sets of  ``unit" length vectors. Namely the sphere $S^n$ is the set of unit vectors in ${\mathbb R}^{n+1}$ with respect to the Euclidean norm 
\[
x_1^2 + \cdots + x_n^2 + x_{n+1}^2 = 1
\]
and the hyperbolic space ${\mathbb H}^n$ is one of the two components the set of ``vectors of imaginary norm $i$" with $x_{n+1}>0$ in  ${\mathbb R}^{n+1}$ with respect to the Minkowski norm 
\[
x_1^2 + \cdots  + x_n^2 - x_{n+1}^2 = -1.
\]
These models of the two constant curvature spaces are called \emph{projective} for the geodesics in the curved spaces are realized as the intersection of the unit spheres with the two-dimensional subspace of ${\mathbb R}^{n+1}$ through the origin of this space.  We also note that each two-dimensional linear subspace intersects the 
hyperplane $\{ x_{n+1} =1\}$ in a line, that is, a Euclidean geodesic.  Hence each two-dimensional linear subspace of ${\mathbb R}^{n+1}$ represents a geodesic in each of the three geometries, consequently establishing the correspondence among the three incidence geometries.    

Now we present the main results.

\begin{theorem}[Spherical Case] \label{th:s}
 Let $P_s$ be the  projection map from the origin of $ {\mathbb R}^{n+1}$ sending the open upper hemisphere $U^n$ of $S^n$ onto the hyperplane $\{ x_{n+1} = 1 \} \subset {\mathbb R}^{n+1}$.  Then the projection map $P_s$ is a perspectivity. In particular it preserves the values of cross ratio; namely for a set of  four ordered pairwise distinct points
  $A_1, A_2, A_3, A_4$ aligned on a great circle in the upper hemisphere, 
  we have \[ [P_s(A_2),P_2(A_3),P_s(A_4),P_s(A_1)]_e=[A_2, A_3, A_4, A_1]_s.\]
\end{theorem}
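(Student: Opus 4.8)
The plan is to reduce the spherical cross ratio identity to a purely trigonometric computation inside a single $2$-plane through the origin of $\mathbb{R}^{n+1}$, exploiting the fact that both the great circle carrying $A_1,\dots,A_4$ and its image line in $\{x_{n+1}=1\}$ lie in one such plane $\Pi$. First I would observe that since a great circle on $S^n$ is the intersection of $S^n$ with a $2$-dimensional linear subspace $\Pi\subset\mathbb{R}^{n+1}$, and $P_s$ is radial projection from the origin, the four image points $P_s(A_i)$ lie on the line $\Pi\cap\{x_{n+1}=1\}$; hence $P_s$ manifestly sends lines to lines, and only the cross ratio invariance requires work. This also shows it suffices to treat the case $n=1$: restrict attention to the plane $\Pi\cong\mathbb{R}^2$, where $U^n\cap\Pi$ is (half of) the unit circle and the target is an affine line not through the center.

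Next I would set up coordinates in $\Pi$: write each point $A_i$ on the unit circle as making an angle $\theta_i$ with a fixed reference direction, so that the spherical (arc-length) distance is $A_iA_j=|\theta_i-\theta_j|$ and $\sin A_iA_j=|\sin(\theta_i-\theta_j)|$. The radial projection to the line $\{x_{n+1}=1\}$ is, in this planar picture, the standard central projection of the circle onto a line, which in a suitable affine parameter $t$ on that line is given by $t_i = \tan(\theta_i)$ up to an affine normalization (choosing the line tangent or a chord conveniently). The key trigonometric identity is then
\[
\tan\theta_i - \tan\theta_j \;=\; \frac{\sin(\theta_i-\theta_j)}{\cos\theta_i\cos\theta_j},
\]
so that each Euclidean factor $\dfrac{P_s(A_2)P_s(A_4)}{P_s(A_3)P_s(A_4)}$ equals $\dfrac{\sin(\theta_2-\theta_4)}{\sin(\theta_3-\theta_4)}\cdot\dfrac{\cos\theta_3}{\cos\theta_2}$, and similarly for the other ratio; the $\cos\theta_i$ factors cancel in pairs across the product defining the cross ratio, leaving exactly $[A_2,A_3,A_4,A_1]_s$. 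One must check the signs work out so that the two formulations agree not just up to sign — here the hypothesis that the points lie on a hemisphere guarantees the $\theta_i$ can be taken in an interval of length $<\pi$ on which all the $\cos\theta_i>0$ and the orderings of the $\theta_i$ and of the $\tan\theta_i$ are consistent, so the signed cross ratios coincide.

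The main obstacle I anticipate is purely bookkeeping rather than conceptual: verifying that the affine parameter on the image line really is $\tan\theta$ (as opposed to some Möbius function of it) requires correctly identifying where $\Pi\cap\{x_{n+1}=1\}$ sits relative to the circle $S^n\cap\Pi$ — in particular this line need not be tangent to the circle, and the center of radial projection (the origin of $\mathbb{R}^{n+1}$) is the center of the circle, which is a slightly unusual projection center compared with the classical Steiner construction. I would handle this by a direct coordinate computation in $\Pi$: parametrize the circle, write the line through the origin and $A_i$, intersect with $\{x_{n+1}=1\}$, and read off the resulting affine coordinate, confirming it is an affine function of $\tan\theta_i$; once that is in hand the cancellation above finishes the proof. \hfill q.e.d.
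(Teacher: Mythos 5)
Your argument is correct, and it reaches the identity by a genuinely different (more classical, coordinate-based) route than the paper. The paper never reduces to a plane section: it works directly in $\mathbb{R}^{n+1}$ and proves the single lemma $\sin d([u],[v])=\frac{\|u-v\|}{\|u\|\,\|v\|}$ for points $u,v$ on the hyperplane $\{x_{n+1}=1\}$ and their radial images $[u],[v]$ on the sphere, obtained from $\sin=\sqrt{1-\cos^2}$, the Lagrange identity $\sqrt{\|u\|^2\|v\|^2-(u\cdot v)^2}=$ area of the parallelogram spanned by $u,v$, and the fact that this parallelogram has base $\|u-v\|$ and height $1$ (the distance from the origin to the hyperplane); substituting this into the cross ratio makes the norms $\|A_i\|$ cancel in pairs. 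Your planar computation implements exactly the same cancellation in disguise: if $\theta$ is measured from the point of the great circle with maximal $x_{n+1}$-coordinate, the affine coordinate on $\ell=\Pi\cap\{x_{n+1}=1\}$ is $t=h\tan\theta$ with $h\geq 1$ the distance from the origin to $\ell$, so your factors $1/\cos\theta_i$ are precisely $\|P_s(A_i)\|/h$, and both $h$ and the cosines drop out of the cross ratio just as the norms do in the paper. The verification you flag as the remaining bookkeeping does come out as you predict, with two small caveats: the reduction is not literally to the case $n=1$, since $\ell$ is tangent to the circle only when the great circle passes through $(0,\dots,0,1)$ (in general $h>1$, as you yourself note), and the coordinate is an affine function of $\tan\theta$ only for that particular choice of reference direction (for a generic reference direction one gets $h\tan(\theta-\theta_0)$, which is harmless since all distances are differences of angles). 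What the paper's route buys is a dimension-free, coordinate-free identity that transfers verbatim to the hyperbolic case by replacing the Euclidean norm with the Minkowski norm (Theorem~\ref{th:h}); what your route buys is elementary plane trigonometry with no appeal to the parallelogram-area identity, at the cost of the coordinate set-up and a separate (though parallel, via $\tanh$) computation for the hyperbolic analogue.
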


\begin{proof} 
Let $u, v$ be the two points on the hyperplane $\{x_{n+1} =1\}$, let $P_s(u)=:[u], P_s(v)=:[v]$ be the points in $U$, and $d([u], [v])$ be the spherical distance between them. Finally let $\|x\|$ be the Euclidean norm of the vector $x \in {\mathbb R}^{n+1}$. We show that 
\[
\sin d( [u], [v] ) = \frac{\|u-v\|}{\|u\|\|v\|}.
\]
This follows from the following trigonometric relations:
\begin{eqnarray*}
\sin d( [u], [v] ) & = & \sin \Big[ \cos^{-1} \Big( \frac{u}{\|u\|} \cdot  \frac{v}{\|v\|} \Big)\Big]  \\ & = &  \sqrt{1 - \cos^2 \Big[ \cos^{-1} \Big( \frac{u}{\|u\|} \cdot  \frac{v}{\|v\|} \Big) \Big]} \\
 & = & \sqrt{1- ( \frac{u}{\|u\|} \cdot  \frac{v}{\|v\|} \Big)^2}   =  \frac{1}{\|u\| \|v\|} \sqrt{\|u\|^2 \|v\|^2 - (u \cdot v)^2}  \\
 & = &  \frac{1}{\|u\| \|v\|} \times (\mbox{the area of parallelogram spanned by $u$ and $v$}) \\
 & = & \frac{\|u-v\|}{\|u\| \|v\|}.
\end{eqnarray*} 

Now  for a set of  four ordered pairwise distinct points
  $A_1, A_2, A_3, A_4$ aligned on a great circle in the upper hemisphere, their spherical cross ratio $[A_2, A_3, A_4, A_1]_s$ is equal to the Euclidean cross ratio $[P_s(A_2), P_s(A_3), P_s(A_4), P_s(A_1)]_e$;
\[
\frac{ \sin d( [A_2], [A_4] )}{\sin d([A_3], [A_4])} \cdot \frac{ \sin d( [A_3], [A_1] )}{\sin d([A_2], [A_1])} 
= \frac{\frac{\|A_2-A_4\|}{\|A_2\| \|A_4\|}}{ \frac{\|A_3-A_4\|}{\|A_3\| \|A_4\|} } \cdot
\frac{\frac{\|A_3-A_1\|}{\|A_3\| \|A_1\|}}{ \frac{\|A_2-A_1\|}{\|A_2\| \|A_1\|} }  
= \frac{\|A_2-A_4\|}{\|A_3 - A_4\|} \cdot \frac{\|A_3-A_1\|}{\|A_2 - A_1\|}
\]

\end{proof}

\begin{theorem}[Hyperbolic Case] \label{th:h}
Let  $P_h$ be the  projection map of the hyperboloid ${\mathbb H}^n \subset {\mathbb R}^{n+1}$ from the origin of $ {\mathbb R}^{n+1}$ onto the unit disc of the hyperplane $\{ x_{n+1} = 1 \} \subset {\mathbb R}^{n+1}$.  Then the projection map $P_h$ is a perspectivity. In particular it preserves the values of cross ratio; namely for a set of  four ordered pairwise distinct points
  $A_1, A_2, A_3, A_4$ aligned on a geodesic in the upper hyperbolid, we have
  \[ [P_h(A_2),P_h(A_3),P_h(A_4),P_h(A_1)]_e=[A_2, A_3, A_4, A_1]_s.\]  
\end{theorem}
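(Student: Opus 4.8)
The plan is to mirror the proof of the spherical case, replacing the spherical trigonometric identity with its hyperbolic analogue. The key computation will be to establish, for two points $u,v$ in the hyperplane $\{x_{n+1}=1\}$ lying inside the unit disc, with $[u],[v]$ their preimages on the hyperboloid $\mathbb{H}^n$ under $P_h$ (i.e.\ $[u]=u/\sqrt{-\langle u,u\rangle}$ where $\langle\cdot,\cdot\rangle$ is the Minkowski form with $\langle x,x\rangle = x_1^2+\cdots+x_n^2-x_{n+1}^2$), a formula of the shape
\[
\sinh d([u],[v]) = \frac{\text{(Minkowski ``area'' term in }u,v)}{\sqrt{-\langle u,u\rangle}\,\sqrt{-\langle v,v\rangle}}.
\]
Here one uses the standard fact that the hyperbolic distance between two points $x,y\in\mathbb{H}^n$ satisfies $\cosh d(x,y) = -\langle x,y\rangle$, so that $\sinh d([u],[v]) = \sqrt{\langle [u],[v]\rangle^2-1} = \frac{1}{\sqrt{-\langle u,u\rangle}\sqrt{-\langle v,v\rangle}}\sqrt{\langle u,v\rangle^2 - \langle u,u\rangle\langle v,v\rangle}$.

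The step that requires a little care, and which I expect to be the main obstacle, is showing that the numerator $\sqrt{\langle u,v\rangle^2-\langle u,u\rangle\langle v,v\rangle}$ can be rewritten as a quantity that is \emph{linear-homogeneous-friendly} in the way the Euclidean $\|u-v\|$ was in the spherical case — that is, one wants the analogue of ``the area of the parallelogram spanned by $u$ and $v$'' so that, when the four cross-ratio factors are assembled, the normalizing denominators $\sqrt{-\langle u,u\rangle}$ cancel in pairs exactly as the $\|A_i\|$ did. Since $u,v$ both have last coordinate $1$, writing $u=(\bar u,1)$, $v=(\bar v,1)$ with $\bar u,\bar v\in\mathbb{R}^n$, one computes $\langle u,u\rangle = |\bar u|^2-1$, $\langle v,v\rangle=|\bar v|^2-1$, $\langle u,v\rangle = \bar u\cdot\bar v - 1$, and a direct expansion should give $\langle u,v\rangle^2-\langle u,u\rangle\langle v,v\rangle = |\bar u-\bar v|^2 - (\text{something})$; identifying this ``something'' as the squared area of the Euclidean parallelogram spanned by $\bar u - \bar v$ and a suitable vector (or, more cleanly, checking it equals $\|u-v\|^2$ minus a cross term that factors) is the one genuinely computational point. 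The cleanest route is probably to observe that $P_h$ restricted to the disc is, up to this normalization, the Beltrami--Klein model, for which the chordal/Euclidean description of geodesics and the distance formula $d(x,y)=\tfrac12\log[\,\ldots\,]$ in terms of the endpoints on the boundary circle is classical; but to keep the proof self-contained and parallel to Theorem~\ref{th:s}, I would carry out the Minkowski-form computation directly.

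With the distance identity in hand, the remainder is formal and identical in structure to the spherical proof: substitute
\[
[A_2,A_3,A_4,A_1]_h = \frac{\sinh d([A_2],[A_4])}{\sinh d([A_3],[A_4])}\cdot\frac{\sinh d([A_3],[A_1])}{\sinh d([A_2],[A_1])},
\]
replace each $\sinh d([A_i],[A_j])$ by its expression $N(A_i,A_j)/(\sqrt{-\langle A_i,A_i\rangle}\sqrt{-\langle A_j,A_j\rangle})$, and note that in the four-fold product each normalizing factor $\sqrt{-\langle A_i,A_i\rangle}$ occurs once in a numerator and once in a denominator, hence cancels. What survives is $\frac{N(A_2,A_4)}{N(A_3,A_4)}\cdot\frac{N(A_3,A_1)}{N(A_2,A_1)}$, which by the computation of the previous paragraph equals $\frac{\|A_2-A_4\|}{\|A_3-A_4\|}\cdot\frac{\|A_3-A_1\|}{\|A_2-A_1\|} = [P_h(A_2),P_h(A_3),P_h(A_4),P_h(A_1)]_e$. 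One should also note at the outset that $P_h$ is a well-defined homeomorphism from $\mathbb{H}^n$ onto the open unit disc sending geodesics (intersections of $\mathbb{H}^n$ with $2$-planes through the origin) to chords (intersections of those same $2$-planes with $\{x_{n+1}=1\}$), which was already observed in the discussion preceding the theorems, so that the statement about preserving lines is immediate and only the cross-ratio identity needs proof. (The statement as printed has a typographical slip — the right-hand side should read $[A_2,A_3,A_4,A_1]_h$, not $\ldots]_s$ — which the proof makes clear.)
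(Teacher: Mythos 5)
You follow the paper's own route: express $\sinh d$ of two hyperboloid points through their representatives $u=(\bar u,1)$, $v=(\bar v,1)$ on the hyperplane $\{x_{n+1}=1\}$, then cancel the normalizing factors $\sqrt{-\langle A_i,A_i\rangle}$ in the fourfold product. The step you flagged as ``the one genuinely computational point'' is indeed the delicate one, but your final paragraph then asserts more than your computation delivers: expanding, one finds
\[
N(u,v)^2:=\langle u,v\rangle^2-\langle u,u\rangle\langle v,v\rangle
=|\bar u-\bar v|^2-\bigl(|\bar u|^2|\bar v|^2-(\bar u\cdot\bar v)^2\bigr),
\]
and the subtracted term is the squared Euclidean area of the parallelogram spanned by $\bar u$ and $\bar v$, so the identification $N(u,v)=\|u-v\|$ is false unless the line through $\bar u,\bar v$ passes through the center of the disc. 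Consequently the surviving quotient $\frac{N(A_2,A_4)}{N(A_3,A_4)}\cdot\frac{N(A_3,A_1)}{N(A_2,A_1)}$ does not equal the Euclidean cross ratio ``by the computation of the previous paragraph''; one more observation is needed.

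The missing observation is collinearity. The four projected points lie on a single chord of the unit disc; if $\delta<1$ denotes the Euclidean distance from the center of the disc to that chord, then for every pair the parallelogram area equals $\delta\,|\bar u-\bar v|$, hence $N(A_i,A_j)=\sqrt{1-\delta^2}\,\|A_i-A_j\|$ with the \emph{same} factor $\sqrt{1-\delta^2}$ for all six pairs, and this factor cancels in the cross ratio. With that sentence added your argument is complete, and it is then essentially the paper's proof --- which, for what it is worth, asserts the pointwise identity $\sinh d([u],[v])=-\|u-v\|/(\|u\|\|v\|)$ (identifying the parallelogram area with $\|u-v\|$) and therefore needs the same correction factor; so your instinct about where the difficulty sits was accurate. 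Your side remarks are fine: $P_h$ carries geodesics (intersections of the hyperboloid with $2$-planes through the origin) to chords of the disc, and the subscript $s$ on the right-hand side of the statement is indeed a typographical slip for $h$.
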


\begin{proof}
We follow the spherical case, where the sphere of the unit radius in ${\mathbb R}^{n+1}$  is replaced by the upper sheet of the sphere of radius $i$, namely the hyperboloid in ${\mathbb R}^{n, 1}$.  Let $u, v$ be the two points on the hyperplane $\{x_{0} =1\}$.  and $P_h(u)=:[u], P_h(v)=:[v]$ be the points in the hyperboloid, or, equivalently, the time-like vectors of unit (Minkowski) norm.   Denote by $d([u], [v])$ the hyperbolic length between the points. 
Also let $\|x\|$ be the Minkowski norm of the vector $x \in {\mathbb R}^{n,1}$.  We will show that 
\begin{eqnarray*}
\sinh d([u], [v]) = - \frac{\|u-v \|}{\|u\| \|v\|}. 
\end{eqnarray*}
Note that the number on the right hand side is positive, for $\|u\|, \|v\|$ are positive imaginary numbers, and $u-v$ is a purely space-like vector, on which the Minkowski norm of ${\mathbb R}^{n, 1}$ and the Euclidean norm of ${\mathbb R}^n$ coincide.  

This follows from the following trigonometric relations
\begin{eqnarray*}
\sinh d( [u], [v] ) & = & \sinh \Big[ \cosh^{-1} \Big( \frac{u}{\|u\|} \cdot  \frac{v}{\|v\|} \Big)\Big] 
 \\ & = &  \sqrt{\cosh^2 \Big[ \cosh^{-1} \Big( \frac{u}{\|u\|} \cdot  \frac{v}{\|v\|} \Big) \Big] -1 } \\
 & = & \sqrt{\Big( \frac{u}{\|u\|} \cdot  \frac{v}{\|v\|} \Big)^2 -1 }  
  =  \frac{\sqrt{-1}}{\|u\| \|v\|} \sqrt{\|u\|^2 \|v\|^2 - (u \cdot v)^2 }   \\
 & = &  \frac{ \sqrt{-1}}{\|u\| \|v\|} \sqrt{-1}\times (\mbox{the area of parallelogram spanned by $u$ and $v$}) \\
 & = & - \frac{\|u-v\|}{\|u\| \|v\|}.
\end{eqnarray*} 
The formula \[\sqrt{\|u\|^2 \|v\|^2 - (u \cdot v)^2 }  =  \sqrt{-1}\times (\hbox{the area of parallelogram spanned by } u \hbox{ and } v) \]
 is as stated in Thurston's notes (Section 2.6 \cite{Th}). Now by the same argument as in the spherical case, the hyperbolic cross ratio $[A_2, A_3, A_4, A_1]_h $ is equal to the Euclidean cross ratio $[P(A_2),P(A_3),P(A_4),P(A_1)]_e$.
\end{proof}

In \S \ref{s:1}, we  have referred to the notion of projectivity invariance of the cross ratio in Euclidean space. We extend this notion for the cross ratio in the spherical and hyperbolic spaces, by using the same definition. In the case of the sphere, we  restrict to configurations so that all the points considered are contained in an open hemishpere. We have the following:
\begin{corollary}
The spherical and hyperbolic cross ratios are projectivity invariants.
\end{corollary}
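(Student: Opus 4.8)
The plan is to transport the Euclidean projectivity invariance recalled in Section~\ref{s:1} through the perspectivities $P_s$ and $P_h$ supplied by Theorems~\ref{th:s} and~\ref{th:h}. I will describe the argument in the spherical case; the hyperbolic case is word for word the same, with $P_s$ replaced by $P_h$, the open hemisphere $U^n$ replaced by $\mathbb{H}^n$, and Theorem~\ref{th:s} replaced by Theorem~\ref{th:h}.

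First I would fix four ordered pairwise distinct great circles $l_1, l_2, l_3, l_4$ of $U^n$ concurrent at a point $A$, together with a great circle $l$ meeting $l_i$ at a point $A_i$, all the points lying in $U^n$; here \emph{projectivity invariance} is the assertion that $[A_2, A_3, A_4, A_1]_s$ does not depend on the choice of the transversal $l$. Since $P_s$ carries great circles of $U^n$ onto lines of $\{x_{n+1}=1\}$ and is a bijection preserving incidence, the images $P_s(l_1), \ldots, P_s(l_4)$ are four lines concurrent at $P_s(A)$, while $P_s(l)$ is a transversal line meeting $P_s(l_i)$ at $P_s(A_i)$. By the classical projectivity invariance of the Euclidean cross ratio, the number $[P_s(A_2), P_s(A_3), P_s(A_4), P_s(A_1)]_e$ is the same for every transversal line cutting the four concurrent lines $P_s(l_i)$.

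Next I would invoke Theorem~\ref{th:s} to get $[A_2, A_3, A_4, A_1]_s = [P_s(A_2), P_s(A_3), P_s(A_4), P_s(A_1)]_e$, and then observe that, conversely, every transversal line $\ell$ of the $P_s(l_i)$ inside the model arises as $P_s(l)$ for a unique great circle $l$ of $U^n$: the two-dimensional subspace spanned by $\ell$ and the origin of $\mathbb{R}^{n+1}$ meets $U^n$ in a great semicircle $l$ with $P_s(l)=\ell$, and $l$ meets each $l_i$ because $P_s$ is an incidence-preserving bijection. Hence, as $l$ ranges over all transversal great circles of the $l_i$ in $U^n$, the quantity $[A_2, A_3, A_4, A_1]_s$ takes the constant value $[P_s(A_2), P_s(A_3), P_s(A_4), P_s(A_1)]_e$, which is exactly the claimed invariance; iterating it also yields invariance under compositions of such perspectivities.

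I do not anticipate a genuine obstacle, as this is a transport-of-structure argument. The only two points that need a little care are: (i) keeping the whole configuration inside the open hemisphere $U^n$ in the spherical case (equivalently, passing to elliptic space), so that the spherical cross ratio is defined and Theorem~\ref{th:s} applies; and (ii) the bijectivity of $P_s$ between great circles of $U^n$ and lines of the model, which is precisely what lets one pull an arbitrary Euclidean transversal back to a spherical one. Both are immediate from the projective description of the models recalled above.
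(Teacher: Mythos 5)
Your argument is correct and is exactly the paper's route: the corollary is deduced by transporting the classical Euclidean projectivity invariance through the perspectivities $P_s$ and $P_h$ of Theorems~\ref{th:s} and~\ref{th:h}, which is precisely what the paper states (your write-up merely spells out the incidence-preservation and pull-back-of-transversals details, plus the hemisphere restriction that the paper also imposes). The paper additionally mentions, as an alternative, the classical Sine-Rule proof, but its actual proof of the corollary coincides with yours.
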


 This follows from the fact that the projection map $P_s$ and $P_h$ are both perspective-preserving transformations. 
 The classical proofs of this result in the cases of Euclidean and spherical geometry (known to Menelaus) relies on the fact that the cross ratio is completely determined by the angles among the lines/geodesic $l_i$'s at the vertex $A$. This proof can easily be done using the so-called Sine Rule.

 \begin{proposition}[Sine Rule]
  Given a triangle $ABC$ with sides $a,b,c$ opposite to the angles $A,B,C$ respectively, we have, in the case where the triangle is Euclidean, spherical, hyperbolic respectively:
\[\frac{a}{\sin A} =\frac{b}{\sin B}= \frac{c}{\sin C}, \,\,\, 
\frac{\sin a}{\sin A} =\frac{\sin b}{\sin B}= \frac{\sin c}{\sin C}, \,\,\, 
\frac{\sinh a}{\sin A} =\frac{\sinh b}{\sin B}= \frac{\sinh c}{\sin C}.\] 
\end{proposition}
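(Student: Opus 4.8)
The plan is to prove the three Sine Rules in a unified way, treating the Euclidean case as a limiting or degenerate instance and the spherical and hyperbolic cases as mirror images of one another under the substitution $\cos \to \cosh$, $\sin \to \sinh$ in the side arguments. The cleanest route is to first establish the respective Cosine Rules and then derive the Sine Rule algebraically; this is the standard path and avoids ad hoc constructions. So first I would record the three second Cosine Rules (or first Cosine Rules): in the spherical case, $\cos a = \cos b \cos c + \sin b \sin c \cos A$; in the hyperbolic case, $\cosh a = \cosh b \cosh c - \sinh b \sinh c \cos A$; and in the Euclidean case, $a^2 = b^2 + c^2 - 2bc\cos A$. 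Each of these can in turn be obtained directly from the projective models in $\mathbb{R}^{n+1}$ and $\mathbb{R}^{n,1}$ used above: realize the vertices as unit vectors (Euclidean or Minkowski), write $\cos A$ (or its hyperbolic analogue) via the inner product of the unit tangent vectors at the vertex $A$, and expand. For the Euclidean statement one simply uses the law of cosines in the plane of the triangle.

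Next I would solve each Cosine Rule for $\cos A$ and substitute into $\sin^2 A = 1 - \cos^2 A$. In the spherical case this gives
\[
\sin^2 A = 1 - \left(\frac{\cos a - \cos b \cos c}{\sin b \sin c}\right)^2 = \frac{\sin^2 b \sin^2 c - (\cos a - \cos b \cos c)^2}{\sin^2 b \sin^2 c}.
\]
The key observation is that the numerator, after using $\sin^2 = 1 - \cos^2$ throughout, is symmetric in $a, b, c$: it equals
\[
1 - \cos^2 a - \cos^2 b - \cos^2 c + 2\cos a \cos b \cos c.
\]
Hence $\dfrac{\sin^2 A}{\sin^2 a} = \dfrac{1 - \cos^2 a - \cos^2 b - \cos^2 c + 2\cos a \cos b \cos c}{\sin^2 a \sin^2 b \sin^2 c}$ is fully symmetric in the three sides, so $\sin A / \sin a = \sin B / \sin b = \sin C / \sin c$ up to sign; since all quantities are positive (the points lie in an open hemisphere, angles in $(0,\pi)$), the signs are all $+$ and we are done. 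The hyperbolic case is the verbatim analogue: from $\cos A = \dfrac{\cosh b \cosh c - \cosh a}{\sinh b \sinh c}$ one gets
\[
\frac{\sin^2 A}{\sinh^2 a} = \frac{1 - \cosh^2 a - \cosh^2 b - \cosh^2 c + 2\cosh a \cosh b \cosh c}{-\sinh^2 a \sinh^2 b \sinh^2 c},
\]
the right-hand side again symmetric in $a,b,c$; one checks the sign is consistent (the numerator is negative for a genuine triangle, compensating the sign in the denominator). The Euclidean case follows either as the small-triangle limit of either of the above, or directly: solving $a^2 = b^2+c^2-2bc\cos A$ for $\cos A$ and forming $\dfrac{\sin^2 A}{a^2} = \dfrac{4b^2c^2 - (b^2+c^2-a^2)^2}{4a^2b^2c^2}$, whose numerator is (sixteen times) the squared area, manifestly symmetric — this is Heron's formula in disguise.

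The main obstacle, such as it is, is purely bookkeeping rather than conceptual: one must verify the claimed symmetry of the numerator expansions, and — more delicately — keep track of signs in the hyperbolic identity, where $\sinh^2$ and the ``area of the parallelogram'' terms are positive but the combination $1 - \cosh^2 a - \cosh^2 b - \cosh^2 c + 2\cosh a\cosh b\cosh c$ is negative. I would present the symmetric numerators explicitly, note that they are (up to a constant) the Cayley–Menger-type determinant of the Gram matrix of the three position vectors, and observe that this determinant interpretation already makes the symmetry evident without expansion. For completeness I would add one sentence indicating the alternative synthetic proof — from a vertex $A$, drop the perpendicular to the opposite side and apply the right-triangle relation $\sin h = \sin b \sin C = \sin c \sin B$ (spherical) or its hyperbolic/Euclidean analogue — which is closer to the classical argument alluded to in the text and connects directly to the cross-ratio discussion, since it is exactly the relation used by Menelaus.
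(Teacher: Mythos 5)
The paper does not actually prove this proposition: it is stated and the reader is referred to \cite{ACP}, where the trigonometric formulae are established in a model-free setting (the perpendicular-from-a-vertex argument you sketch in your last sentence is essentially that classical route, and it is the one the paper's surrounding discussion alludes to). Your route --- derive the three Cosine Rules from the vector models, solve for $\cos A$, and observe that $\sin^2 A/\sin^2 a$ (resp.\ $\sin^2 A/\sinh^2 a$, $\sin^2 A/a^2$) is a symmetric function of the three sides --- is a standard and complete alternative, and it meshes well with the projective models already used in Theorems \ref{th:s} and \ref{th:h}; your remark that the symmetric numerator is, up to sign, the Gram determinant of the three position vectors does make the symmetry evident without expansion.

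One concrete correction is needed in the hyperbolic case. From $\cos A=\dfrac{\cosh b\cosh c-\cosh a}{\sinh b\sinh c}$ one gets
\[
\frac{\sin^2 A}{\sinh^2 a}=\frac{1-\cosh^2 a-\cosh^2 b-\cosh^2 c+2\cosh a\cosh b\cosh c}{\sinh^2 a\,\sinh^2 b\,\sinh^2 c},
\]
with \emph{no} minus sign in the denominator, and the numerator is \emph{positive} for a genuine hyperbolic triangle, not negative as you claim: it equals $\sin^2 A\,\sinh^2 b\,\sinh^2 c>0$; equivalently it is $-\det G$, where $G$ is the Minkowski Gram matrix of the three unit timelike position vectors, and $\det G<0$ because their span has signature $(2,1)$; or simply check the equilateral case $\cosh a=\cosh b=\cosh c=t$, where the numerator is $(t-1)^2(2t+1)>0$. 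So the sentence ``the numerator is negative for a genuine triangle, compensating the sign in the denominator'' and the displayed identity with denominator $-\sinh^2 a\sinh^2 b\sinh^2 c$ are both wrong; as written, that identity would make a manifestly nonnegative left-hand side equal to a negative quantity. The slip does not affect the structure of the argument --- the corrected right-hand side is still symmetric in $a,b,c$, which is all you use, and taking positive square roots is justified since sides and angles give positive $\sinh$ and $\sin$ values --- so the proof is repaired by deleting the minus sign and the accompanying sign discussion. The spherical and Euclidean computations, and the positivity remarks there, are correct as stated.
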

For this and for other trigonometric formulae in hyperbolic trigonometry we refer the reader to \cite{ACP} where the proofs are given in a model-free setting. In such a setting the proofs in the hyperbolic and the spherical cases can be adapted from each other with very little changes.

\section{Generalized Beltrami-Klein's models of ${\mathbb H}^n$}

Given a bounded open convex set $\Omega$ in a Euclidean space, D. Hilbert  in (\cite{H} 1895) proposed a natural metric $H(x, y)$, now called the \emph{Hilbert metric}, defined for $x\not= y$ in $\Omega$ as the logarithm of the cross ratio of the quadruple $(x, y, b(x, y), b(y,x))$, where $b(x, y)$ is the point where the ray $R(x, y)$ from $x$ through $y$ hits the boundary  $\partial \Omega$ of $\Omega$.  This defines a 
metric on $\Omega$, which is Finslerian and projective.  We refer to the article \cite{PY}  for a parallel  treatment of the subject in hyperbolic and in spherical geometry.

The most prominent example of Hilbert metric is the Beltrami-Klein model of hyperbolic 
space, where the underlying convex set $\Omega$ is the unit ball in $\mathbb{R}^n$.  In fact, this special case was Hilbert's primary motivation to define the so-called Hilbert metric on an arbitrary bounded open convex set $\Omega$ in a Euclidean space.  Actually, for the Beltrami-Klein model, the 
size of the ball is irrelevant, as the Hilbert metric is invariant under homothety of the underlying 
Euclidean space, so that for each Hilbert metric $H_\rho(x, y)$ defined on the ball $B_
\rho(0)$ of radius $\rho >0$ centered at the origin, $(B_\rho, H_\rho)$ is 
isometric to the hyperbolic space ${\mathbb H}^n$.  

Immediate corollaries  of Theorems \ref{th:s} and \ref{th:h} are that there are new models of the hyperbolic space, which we call generalized Beltrami-Klein models.  We first set some notation: let $B^h_\rho$ and $B^s_\rho$ be the geodesic balls of  $\mathbb H^n$ and $S^n$ respectively, both centered at a fixed reference point  which is identified with the point $(0,\cdots, 0, 1)$ of ${\mathbb R}^{n+1}$ through the projective models.  
Then we define the spherical Hilbert metric as follows:

\begin{definition} \label{d:hilbert-s}
For a pair of points $x$ and $y$ in $B_\rho^s$ with $0< \rho \leq \pi/2$, 
the Hilbert distance from $x$ to $y$ is defined by 
\begin{equation*}
H^s_\rho(x, y)= 
\begin{cases}  \displaystyle \log \frac{\sin d(x, b(x,y))}{\sin d(y,  b(x,y))} \cdot  \frac{\sin d(y, b(y,x))}{\sin d(x,  b(y,x))} & \text{if $x\not=y$, }
\\
0 &  \text{otherwise}
\end{cases}
\end{equation*}
where $b(x, y)$ is the point where the  geodesic ray $R(x, y)$ from $x$ through $y$ hits the boundary  $\partial B^s_\rho$ of $B^s_\rho$. 
\end{definition}

\noindent  We then define hyperbolic Hilbert metric:

\begin{definition} \label{d:hilbert-h}
For a pair of points $x$ and $y$ in $B_\rho^h$ with $\rho >0$, 
the Hilbert distance from $x$ to $y$ is defined by 
\begin{equation*}
H^h_\rho(x, y)= 
\begin{cases}  \displaystyle \log \frac{\sinh d(x, b(x,y))}{\sinh d(y,  b(x,y))} \cdot  \frac{\sinh d(y, b(y,x))}{\sinh d(x,  b(y,x))} & \text{if $x\not=y$, }
\\
0 &  \text{otherwise}
\end{cases}
\end{equation*}
where $b(x, y)$ is the point where the  geodesic ray $R(x, y)$ from $x$ through $y$ hits the boundary  $\partial B^h_\rho$ of $B^h_\rho$. 
\end{definition}

Through the projective maps $P_s$ and $P_h$,  each geodesic ball in the curved spaces corresponds to a Euclidean ball. From Theorems \ref{th:s} and \ref{th:h}, the Hilbert distance, which is the logarithm of the spherical/hyperbolic cross ratio of the  quadruple $ (x, y, b(x,y), b(y,x))$ defined on the geodesic ball, is preserved by projective maps; that is, $P_h$ and $P_s$ are isometries of the Hilbert metrics.  Hence it follows that in the spherical case, we have the following:

\begin{corollary}
The geodesic ball $B^s_\rho$ in the unit sphere $S^n$  with $0 < \rho < \pi/2$ with its spherical Hilbert metric $H^s_\rho$ is isometric to the hyperbolic space ${\mathbb H}^n$. 
\end{corollary}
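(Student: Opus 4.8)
The plan is to reduce the statement to Theorem~\ref{th:s} together with the classical fact about the Beltrami-Klein model. First I would recall that, by Theorem~\ref{th:s}, the projection map $P_s$ sends the geodesic ball $B^s_\rho \subset U^n$ onto a Euclidean ball in the hyperplane $\{x_{n+1}=1\}$: indeed $P_s$ carries great circles to Euclidean lines and the spherical circle $\partial B^s_\rho$, being the equidistant set from the basepoint $(0,\dots,0,1)$, is sent to a Euclidean circle centered at the origin of the hyperplane, of some radius $r=r(\rho)>0$ (concretely $r=\tan\rho$, but the exact value is irrelevant). Call this Euclidean ball $\Omega_\rho$.

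Next I would invoke Theorem~\ref{th:s} in its cross-ratio form. For $x\neq y$ in $B^s_\rho$, the ray $R(x,y)$ meets $\partial B^s_\rho$ in the points $b(x,y)$ and $b(y,x)$; under $P_s$ these map to the two boundary points of $\Omega_\rho$ on the Euclidean line through $P_s(x)$ and $P_s(y)$. Applying the equality $[P_s(A_2),P_s(A_3),P_s(A_4),P_s(A_1)]_e = [A_2,A_3,A_4,A_1]_s$ to the ordered quadruple built from $x,y,b(x,y),b(y,x)$ shows that the spherical cross ratio defining $H^s_\rho(x,y)$ equals the Euclidean cross ratio of the image quadruple. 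Taking logarithms, $H^s_\rho(x,y)$ equals the Hilbert distance of $P_s(x),P_s(y)$ in the Euclidean convex set $\Omega_\rho$; that is, $P_s\colon (B^s_\rho, H^s_\rho)\to (\Omega_\rho, H_{\Omega_\rho})$ is an isometry.

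Finally I would close the argument by citing Hilbert's classical computation: for the Euclidean ball $\Omega_\rho$, the Hilbert metric $H_{\Omega_\rho}$ is (a fixed multiple of) the Beltrami-Klein metric, and $(\Omega_\rho, H_{\Omega_\rho})$ is isometric to $\mathbb{H}^n$, with the normalization independent of $r$ since the Hilbert metric is invariant under homothety of the ambient Euclidean space. Composing the two isometries gives that $(B^s_\rho, H^s_\rho)$ is isometric to $\mathbb{H}^n$, as claimed.

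The only genuine point requiring care — and what I regard as the main obstacle — is verifying that $P_s(\partial B^s_\rho)$ is a round Euclidean circle (ball) and not merely some convex curve, since the Hilbert-metric machinery, and in particular the identification with $\mathbb{H}^n$ rather than with a general Hilbert geometry, depends on the image being an ellipsoid. This follows because the central projection from the origin maps the sphere to the hyperplane as a projective (hence conic-preserving) transformation, and the spherical ball boundary is the intersection of $S^n$ with an affine hyperplane, so its image is the intersection of $\Omega_\rho$'s affine hull with a quadric; rotational symmetry about the $x_{n+1}$-axis then forces this quadric to be a sphere. Everything else is the routine substitution of the cross-ratio identity from Theorem~\ref{th:s} into Definition~\ref{d:hilbert-s}.
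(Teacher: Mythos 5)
Your proposal is correct and follows essentially the same route as the paper: use Theorem~\ref{th:s} to show $P_s$ carries $H^s_\rho$ isometrically to the Hilbert metric of a Euclidean ball, then invoke the classical Beltrami--Klein identification (with homothety invariance) of that Hilbert geometry with $\mathbb{H}^n$. The only difference is that you explicitly verify that $P_s(B^s_\rho)$ is a round Euclidean ball of radius $\tan\rho$, a point the paper simply asserts.
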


And in the hyperbolic case, we have:
\begin{corollary}
The geodesic ball $B^h_\rho$ in the hyperbolic space  ${\mathbb H}^n$ with its hyperbolic Hilbert metric $H^h_\rho$ is isometric to the hyperbolic space ${\mathbb H}^n$. 
\end{corollary}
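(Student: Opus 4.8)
The plan is to show that the projection map $P_h$ of Theorem \ref{th:h}, restricted to $B^h_\rho$, is an isometry onto a Euclidean ball carrying its classical Hilbert metric, and then to invoke the fact recalled above that any Euclidean ball equipped with its Hilbert metric is isometric to ${\mathbb H}^n$. So there are really three things to do: identify the image $P_h(B^h_\rho)$, check that $P_h$ carries the hyperbolic Hilbert metric to the Euclidean one, and quote the Beltrami--Klein fact.

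First I would identify the image. Since $P_h$ is the radial projection from the origin of ${\mathbb R}^{n+1}$ onto the hyperplane $\{x_{n+1}=1\}$, and the reference point $(0,\dots,0,1)$ — the centre of $B^h_\rho$ — is fixed by it, a point of ${\mathbb H}^n$ lying at hyperbolic distance $t$ from the reference point can be written $(\sinh t\,\omega,\cosh t)$ with $\omega$ a unit vector of ${\mathbb R}^n$, and its image is $(\tanh t\,\omega,1)$. Hence $P_h$ restricts to a homeomorphism of $B^h_\rho$ onto the open Euclidean ball $B_r\subset\{x_{n+1}=1\}\cong{\mathbb R}^n$ of radius $r=\tanh\rho\in(0,1)$, carrying $\partial B^h_\rho$ onto the Euclidean sphere $\partial B_r$. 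Being a bounded open convex set, $B_r$ carries a classical Hilbert metric $H_r$, and by homothety invariance $(B_r,H_r)$ is isometric to ${\mathbb H}^n$, exactly as in the standard Beltrami--Klein model.

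Next I would check that $P_h$ intertwines $H^h_\rho$ with $H_r$. Because $P_h$ sends geodesics of ${\mathbb H}^n$ to Euclidean line segments in $\{x_{n+1}=1\}$, for $x\neq y$ in $B^h_\rho$ it sends the geodesic ray $R(x,y)$ to the Euclidean ray from $P_h(x)$ through $P_h(y)$; since it also sends $\partial B^h_\rho$ onto $\partial B_r$, it follows that $P_h(b(x,y))=b(P_h(x),P_h(y))$, the corresponding Euclidean boundary point, and likewise $P_h(b(y,x))=b(P_h(y),P_h(x))$. The four points $x,y,b(x,y),b(y,x)$ are pairwise distinct and aligned on a single geodesic, so Theorem \ref{th:h} applies to the quadruple $(A_2,A_3,A_4,A_1)=(x,y,b(x,y),b(y,x))$; comparing the definition of the hyperbolic cross ratio with Definition \ref{d:hilbert-h} shows that $H^h_\rho(x,y)=\log[x,y,b(x,y),b(y,x)]_h$, and Theorem \ref{th:h} equates this with the logarithm of the Euclidean cross ratio of the four images, which is by definition $H_r(P_h(x),P_h(y))$. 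Thus $P_h\colon(B^h_\rho,H^h_\rho)\to(B_r,H_r)$ is an isometry, and composing it with an isometry $(B_r,H_r)\to{\mathbb H}^n$ proves the corollary.

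The only point needing genuine care rather than routine substitution is the boundary correspondence: one must make sure the radial projection carries $\partial B^h_\rho$ exactly onto $\partial B_r$ and carries the geodesic chord joining $x$ and $y$ onto the Euclidean chord of $B_r$ joining their images, so that the quadruple $(x,y,b(x,y),b(y,x))$ is transported to the corresponding Euclidean quadruple and Theorem \ref{th:h} can be applied verbatim. Everything else reduces to unwinding the definitions. The preceding corollary, for the spherical geodesic ball $B^s_\rho$ with $0<\rho<\pi/2$, is proved in exactly the same way, using $P_s$ and Theorem \ref{th:s} in place of $P_h$ and Theorem \ref{th:h}, with the radius of the image ball now equal to $\tan\rho<1$.
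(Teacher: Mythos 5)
Your argument is correct and follows essentially the same route as the paper: $P_h$ carries the geodesic ball onto a Euclidean ball, Theorem \ref{th:h} transports the hyperbolic cross ratio of the quadruple $(x,y,b(x,y),b(y,x))$ (hence the Hilbert distance) to the Euclidean one, and the classical Beltrami--Klein fact together with homothety invariance finishes the proof. The only slip is in your closing aside on the spherical case: the image of $B^s_\rho$ under $P_s$ is the Euclidean ball of radius $\tan\rho$, which need not be less than $1$ when $\pi/4<\rho<\pi/2$; this is harmless, since the Hilbert metric is invariant under homotheties so the size of the image ball is irrelevant.
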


We can consider the spaces $(B^s_\rho, H^s_\rho)$ and $(B^h_\rho, H^h_\rho)$ as {\it generalized Beltrami-Klein models of the hyperbolic space.}

\noindent  Athanase Papadopoulos,  Institut de Recherche Math\'ematique Avanc\'ee,
Universit{\'e} de Strasbourg and CNRS,
7 rue Ren\'e Descartes,
 67084 Strasbourg Cedex, France;  email : athanase.papadopoulos@math.unistra.fr

 \bigskip
 
 \noindent Sumio Yamada, Mathematical Institute, Tohoku University, Sendai, 980-8578, Japan; email : yamada@math.tohoku.ac.jp

\end{document}